\newtheorem{theorem}{Theorem}
\newtheorem{lemma}[theorem]{Lemma}
\newcommand{\C}{\mathbf C}
\newcommand{\Q}{\mathbf Q}
\newcommand{\Z}{\mathbf Z}
\newcommand{\F}{\mathbf F}
\renewcommand{\Re}{{\operatorname{Re}}}
\newcommand{\trace}{{\operatorname{trace}}}
\newcommand{\Sym}{{\operatorname{Sym}}}
\newcommand{\Li}{{\operatorname{Li}}}
\begin{document}
\title{Zeros of Witten zeta functions and absolute limit}
\author{N. Kurokawa and H. Ochiai}
\date{}
\maketitle

\section{Introduction}
The Witten zeta function
\[
\zeta_G^W(s) = \sum_{\rho \in \hat{G}} \deg(\rho)^{-s}
\]
was introduced by Witten \cite{W} in 1991, 
where $G$ is a compact topological group
and $\hat{G}$ denotes the unitary dual,
that is, the set of equivalence classes of irreducible unitary representations.
The example
\[
\zeta^W_{SU(2)}(s)
= \sum_{m=0}^\infty \deg(\Sym^m)^{-s}
= \sum_{n=1}^\infty n^{-s} = \zeta(s),
\]
where $\zeta(s)$ denotes the Riemann zeta function,
suggests fine properties for general case.
In fact, Witten showed arithmetical interpretation for
$\zeta^W_{SU(n)}(2m)$ ($m=1,2,3,\dots$) containing Euler's result (\cite{E1} 1735)
\[
\zeta^W_{SU(2)}(2m) \in \pi^{2m} \Q.
\]
In this paper we look at the opposite side:
special values at negative integers such as
\begin{align}
\zeta^W_{SU(2)}(-1) &= \mbox{``\ $\displaystyle\sum_{n=1}^\infty n$\ ''} = -\frac1{12}, \\
\zeta^W_{SU(2)}(-2) &= \mbox{``\ $\displaystyle\sum_{n=1}^\infty n^2$\ ''} = 0
\end{align}
due to Euler \cite{E2}(1749).
We notice that the value
\[
\mbox{``\ $\displaystyle\sum_{n=1}^\infty n$\ ''}=-\frac1{12}
\]
appears as the one-dimensional Casimir energy:
see Casimir \cite{C} and Hawking \cite{H}.
The equality
\[
\mbox{``\ $\displaystyle\sum_{n=1}^\infty n^2$\ ''} = 0
\]
means the vanishing of the two-dimensional Casimir energy.

We notice that
\[
\zeta^W_G(-2) = \left| G \right|
\]
when $G$ is a finite group.
We conjecture that
\begin{equation}
\zeta^W_G(-2)=0
\end{equation}
for infinite groups $G$.

For deeper understanding of the situation, 
we introduce a new zeta function
(Witten $L$-function)
\begin{equation}
\zeta^W_G(s,g) = \sum_{\rho \in \widehat{G}} \frac{\trace(\rho(g))}{\deg(\rho)} \deg(\rho)^{-s}
\end{equation}
where $G$ is a compact topological group,
$g$ is an element of $G$,
$\widehat{G}$ is the set of equivalence classes of
irreducible ($\C$-valued) representations of $G$,
$\deg(\rho)$ is the degree (the dimension) of 
an irreducible representation $\rho \in \widehat{G}$.
Note that $\trace(\rho(g))$ is the character of the representation $\rho$.
This Witten zeta function $\zeta_G^W(s,g)$
reduces to 
the (usual) Witten zeta function
when we specialize $g$ to be the identity element $1 \in G$:
\[
\zeta^W_G(s) = \zeta^W_G(s,1).
\]
In the case of a finite group $G$ we have
\[
\zeta^W_G(-2,g) = 
\left\{ \begin{array}{ll}
\left| G \right| & \mbox{ if } g=1, \\
0 & \mbox{ otherwise}.
\end{array}
\right.
\]
We conjecture that
\begin{equation}
\zeta^W_G(-2,g)=0
\end{equation}
when $G$ is an infinite group.
The following result treats the case $G=SU(2)$.

\begin{theorem}\label{Theorem:SU(2)}
{\color{black} Suppose $g \in SU(2)$ is conjugate to
$
\left( \begin{array}{cc}
e^{i \theta} & 0 \\ 0 & e^{-i\theta} \end{array} \right)$
with $0 \le \theta \le \pi$.}
\begin{itemize}
\item[{\rm(1)}]
{\color{black}We have an expression}
\[
\zeta^W_{SU(2)}(s,g) 
= \sum_{n=1}^\infty \frac{\sin(n \theta)}{n \sin \theta} n^{-s}
\]
in $\Re(s)>1$ 
{\color{black}The function $\zeta^W_{SU(2)}(s,g)$ in $s$ 
has a meromorphic continuation to the whole complex plane.}
\item[{\rm(2)}]
For a positive even integer $m$,
we have
$\zeta_{SU(2)}^W(-m,g)=0$ for all $g \in SU(2)$.
Moreover, $s=-2$ is a simple zero of $\zeta_{SU(2)}^W(s,g)$,
and the first derivative at $s=-2$ is given as
\[
\frac{\partial \zeta^W_{SU(2)}}{\partial s}
(-2,g)
=
\left\{\begin{array}{ll}
-\frac{\zeta(3)}{4\pi^2} & \mbox{ if } \theta=0, \\
\frac{1}{4\pi \sin \theta}
\left( \zeta(2,\frac{\theta}{2\pi}) - \frac{\pi^2}{2 \sin^2 \frac{\theta}{2}} \right) >0
& \mbox{ if } 0 < \theta < \pi, \\
\frac{7\zeta(3)}{4\pi^2} & \mbox{ if } \theta=\pi.
\end{array}\right.
\]
Here $\zeta(s,x)$ denotes the Hurwitz zeta function.

\item[{\rm(3)}]
{\color{black} The special value at $s=-1$ is given as}
\[
\zeta^W_{SU(2)}(-1, g)
=
\left\{\begin{array}{ll}
-\frac1{12} & \mbox{ if } \theta=0, \\
\frac{1}{4 \sin^2\frac{\theta}{2}}
& \mbox{ if } 0 < \theta < \pi, \\
\frac14
& \mbox{ if } \theta=\pi.
\end{array}\right.
\]
\end{itemize}
\end{theorem}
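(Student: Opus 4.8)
The plan is to reduce everything to a single closed form for the analytic continuation and then read off the special values. First I would record the representation theory of $SU(2)$: the irreducibles are the symmetric powers $\Sym^m$ ($m\ge 0$) with $\deg(\Sym^m)=m+1$ and character $\trace(\Sym^m(g))=\sin((m+1)\theta)/\sin\theta$ at $g\sim\operatorname{diag}(e^{i\theta},e^{-i\theta})$. Substituting and writing $n=m+1$ gives the Dirichlet series in part (1), which converges absolutely for $\Re(s)>1$ (indeed conditionally for $\Re(s)>0$ when $\theta\ne 0$). For the continuation I would recognize the sum as an imaginary part of a polylogarithm on the unit circle and apply Hurwitz's formula; after separating the sine-series from its reflection one obtains
\[
\zeta^W_{SU(2)}(s,g)=\frac{(2\pi)^{s+1}}{4\sin\theta\,\Gamma(s+1)\,\sin\!\big(\tfrac{\pi(s+1)}{2}\big)}\Big(\zeta(-s,\tfrac{\theta}{2\pi})-\zeta(-s,1-\tfrac{\theta}{2\pi})\Big),
\]
valid for $0<\theta<\pi$. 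Since the residues of the two Hurwitz zetas at $-s=1$ cancel, the bracket is entire in $s$, while the prefactor is meromorphic, giving the asserted continuation to all of $\C$.

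For part (2), at $s=-m$ with $m$ a positive even integer the factor $1/\Gamma(s+1)=1/\Gamma(1-m)$ vanishes while $\sin(\pi(1-m)/2)=\pm1$ and the bracket is finite, so $\zeta^W_{SU(2)}(-m,g)=0$. To see that $s=-2$ is a simple zero I would factor $\zeta^W_{SU(2)}(s,g)=P(s)B(s)$ with $B(s)=\zeta(-s,a)-\zeta(-s,1-a)$, $a=\theta/(2\pi)$, and use that $1/\Gamma(s+1)$ has a simple zero with $1/\Gamma(s+1)\sim -(s+2)$ near $s=-2$; this yields $P'(-2)=1/(8\pi\sin\theta)$ and $\partial_s\zeta^W_{SU(2)}(-2,g)=P'(-2)B(-2)$. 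The reflection identity $\zeta(2,a)+\zeta(2,1-a)=\pi^2/\sin^2(\pi a)=\pi^2/\sin^2(\theta/2)$ then converts $B(-2)=2\zeta(2,a)-\pi^2/\sin^2(\theta/2)$ into the stated expression. Positivity for $0<\theta<\pi$ follows since $a<\tfrac12<1-a$ forces $\zeta(2,a)>\zeta(2,1-a)$, i.e. $B(-2)>0$, together with $\sin\theta>0$.

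For part (3), at $s=-1$ the prefactor is a $0\cdot\infty$ limit: with $u=s+1\to0$ one has $\Gamma(u)\sim 1/u$ and $\sin(\pi u/2)\sim\pi u/2$, so $P(-1)=1/(2\pi\sin\theta)$, while $B(-1)$ is the finite part of a difference of two Hurwitz zetas each singular at $-s=1$. The Laurent expansion $\zeta(w,a)=\tfrac1{w-1}-\psi(a)+\cdots$ shows the poles cancel and $B(-1)=\psi(1-a)-\psi(a)=\pi\cot(\pi a)$ by the digamma reflection formula. Combining and simplifying with $\sin\theta=2\sin(\theta/2)\cos(\theta/2)$ gives $\zeta^W_{SU(2)}(-1,g)=1/(4\sin^2(\theta/2))$. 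The boundary cases are treated separately: $\theta=0$ is $g=1$, where $\zeta^W_{SU(2)}(s,1)=\zeta(s)$, and $\theta=\pi$ is $g=-I$, where $\trace(\Sym^m(-I))/\deg(\Sym^m)=(-1)^m$ gives $\zeta^W_{SU(2)}(s,-I)=\eta(s)=(1-2^{1-s})\zeta(s)$; the quoted values then follow from $\zeta(-1)=-\tfrac1{12}$, $\zeta(-2)=0$, and $\zeta'(-2)=-\zeta(3)/(4\pi^2)$.

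I expect the main obstacle to be the careful limiting analysis at $s=-1$ and the derivative at $s=-2$, where one must track the cancellation of the poles in $B(s)$ against the zeros and poles of the factor $1/(\Gamma(s+1)\sin(\pi(s+1)/2))$; pinning down the finite parts and constants exactly, and then matching them continuously to the boundary values at $\theta=0$ and $\theta=\pi$, is the delicate part, whereas the continuation itself is a routine application of Hurwitz's formula.
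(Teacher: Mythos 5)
Your proposal is correct, and it reorganizes the argument in a genuinely different way from the paper. The paper works throughout with the polylogarithm $Z(s,x)=\sum_{n\ge1}x^n n^{-s}$, writing $\zeta^W_{SU(2)}(s,g)=\frac{1}{2i\sin\theta}\left(Z(s+1,e^{i\theta})-Z(s+1,e^{-i\theta})\right)$: the continuation is proved by an elementary binomial-expansion recursion for $Z(s,x)$ on $|x|=1$, $x\neq1$ (no functional equation needed); the vanishing at even $s=-m$ comes from the parity identity $Z(-m,x)+(-1)^mZ(-m,x^{-1})=0$, deduced from Jonqui\`ere's formula; the derivative at $s=-2$ is obtained by \emph{differentiating} Jonqui\`ere's formula at $s=-1$, using $1/\Gamma(s)=-(s+1)+O((s+1)^2)$, the closed value $Z(-1,e^{i\theta})=-\frac{1}{4\sin^2(\theta/2)}$, and the reflection $\zeta(2,t)+\zeta(2,1-t)=\pi^2/\sin^2(\pi t)$; and part (3) is a one-line consequence of $Z(0,x)=\frac{x}{1-x}$. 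You instead solve that same functional equation (Jonqui\`ere's formula \emph{is} Hurwitz's formula) for the sine series, obtaining the single global factorization $\zeta^W_{SU(2)}(s,g)=P(s)B(s)$ with $B(s)=\zeta(-s,a)-\zeta(-s,1-a)$, $a=\theta/2\pi$, and read everything off by Laurent analysis: the trivial zeros from the zeros of $1/\Gamma(s+1)$ against $\sin(\pi(s+1)/2)=\pm1$, the derivative from $1/\Gamma(s+1)\sim-(s+2)$, and the $s=-1$ value from the cancellation of the Hurwitz poles plus the digamma reflection $\psi(1-a)-\psi(a)=\pi\cot(\pi a)$. I checked your constants: $P'(-2)=\frac{1}{8\pi\sin\theta}$ with $B(-2)=\zeta(2,a)-\zeta(2,1-a)$ reproduces the theorem's $\frac{1}{4\pi\sin\theta}\left(\zeta(2,\frac{\theta}{2\pi})-\frac{\pi^2}{2\sin^2(\theta/2)}\right)$, and $P(-1)B(-1)=\frac{1}{2\pi\sin\theta}\cdot\pi\cot(\theta/2)=\frac{1}{4\sin^2(\theta/2)}$; the boundary cases $\theta=0,\pi$ are handled identically in both treatments via $\zeta(s)$ and $(1-2^{1-s})\zeta(s)$. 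What each approach buys: your closed form makes the location and simplicity of all the zeros, and the holomorphy of the continuation, visible at a glance and treats parts (1)--(3) uniformly, at the price of a more delicate $0\cdot\infty$ bookkeeping at $s=-1$, where the paper's rational polylog values $Z(0,x)$, $Z(-1,x)$ give the special values instantly; conversely, the paper's recursion argument yields the analytic continuation by elementary means, independent of the Hurwitz functional equation.
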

{\color{black} We now introduce a `multi'-version
of Witten $L$-function.
For $g_1,\ldots,g_r \in G$, 
we define}
\begin{align*}
\zeta^W_G(s;g_1,\dots,g_r) 
& := \sum_{\rho \in \widehat{G}} 
\frac{\trace(\rho(g_1))}{\deg(\rho)} \cdots
\frac{\trace(\rho(g_r))}{\deg(\rho)} \times \deg(\rho)^{-s} \\
& = \sum_{\rho \in \widehat{G}}
\frac{\trace(\rho(g_1)) \cdots \trace(\rho(g_r))}{\deg(\rho)^{s+r}}.
\end{align*}

{\color{black} It is natural to ask whether the vanishing 
$\zeta^W_G(-2;g_1,\dots,g_r) \overset{?}{=} 0$
of the special value at $s=-2$ for this generalization
holds.}
{\color{black} We have a partial answer to this question.}
\begin{theorem}\label{r=2}
We have
$\zeta^W_{SU(2)}(-m;g_1,g_2) =0$
for $g_1, g_2 \in SU(2)$,
and a positive even integer $m$.
\end{theorem}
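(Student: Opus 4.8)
The plan is to reduce everything to one-dimensional trigonometric Dirichlet series and then to import the vanishing already established in Theorem~\ref{Theorem:SU(2)}. Write $g_j$ as conjugate to $\mathrm{diag}(e^{i\theta_j},e^{-i\theta_j})$ with $0 \le \theta_j \le \pi$. For $0 < \theta_j < \pi$ the character of the $n$-dimensional irreducible representation is $\trace(\rho(g_j)) = \sin(n\theta_j)/\sin\theta_j$, so in $\Re(s) > -1$
\[
\zeta^W_{SU(2)}(s;g_1,g_2) = \frac{1}{\sin\theta_1 \sin\theta_2}\sum_{n=1}^\infty \frac{\sin(n\theta_1)\sin(n\theta_2)}{n^{s+2}}.
\]
Applying $2\sin(n\theta_1)\sin(n\theta_2) = \cos(n(\theta_1-\theta_2)) - \cos(n(\theta_1+\theta_2))$ and introducing the cosine Dirichlet series $C(w,\alpha) := \sum_{n\ge 1}\cos(n\alpha)\,n^{-w}$, I would rewrite this as
\[
\zeta^W_{SU(2)}(s;g_1,g_2) = \frac{1}{2\sin\theta_1\sin\theta_2}\bigl(C(s+2,\theta_1-\theta_2) - C(s+2,\theta_1+\theta_2)\bigr).
\]
Since $\alpha \not\equiv 0$ makes the partial sums of $\cos(n\alpha)$ bounded, each $C(w,\alpha)$ continues to an entire function of $w$; this yields the meromorphic continuation of the left side and reduces the theorem, in the range $0 < \theta_1,\theta_2 < \pi$, to understanding $C(w,\alpha)$ at the points $w = s+2 = 2-m \in \{0,-2,-4,\dots\}$.

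The key step is the claim that $C(0,\alpha) = -\tfrac12$ and $C(-2k,\alpha) = 0$ for every integer $k \ge 1$, with both values independent of $\alpha$. I would deduce this from Theorem~\ref{Theorem:SU(2)} rather than re-deriving a functional equation. Writing $S(w,\alpha) := \sum_{n\ge1}\sin(n\alpha)\,n^{-w}$, part~(2) of Theorem~\ref{Theorem:SU(2)} says precisely that $\frac{1}{\sin\theta}S(1-m,\theta) = \zeta^W_{SU(2)}(-m,g) = 0$ for all $\theta$ and all positive even $m$; hence $S(w,\cdot)$ vanishes identically in $\alpha$ at every negative odd integer $w = 1-m$. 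Differentiating the absolutely convergent series in $\alpha$ gives the identity $\partial_\alpha S(w+1,\alpha) = C(w,\alpha)$, which persists under continuation in $w$; evaluating at $w = -2k$ (so $w+1 = 1-2k$ is a negative odd integer) yields $C(-2k,\alpha) = \partial_\alpha S(1-2k,\alpha) = 0$. For $w = 0$ the same identity together with the classical expansion $S(1,\alpha) = (\pi-\alpha)/2$ on $(0,2\pi)$ gives $C(0,\alpha) = -\tfrac12$; when $\alpha \equiv 0$ one instead uses $C(w,0) = \zeta(w)$ and the values $\zeta(0) = -\tfrac12$, $\zeta(-2k) = 0$, which match. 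Consequently the bracket $C(s+2,\theta_1-\theta_2) - C(s+2,\theta_1+\theta_2)$ vanishes at every $s = -m$, proving the theorem for $0 < \theta_1,\theta_2 < \pi$.

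It remains to treat the boundary angles, where the prefactor $1/(\sin\theta_1\sin\theta_2)$ degenerates. If $\theta_1 = 0$ then $g_1 = 1$ and $\zeta^W_{SU(2)}(s;1,g_2) = \zeta^W_{SU(2)}(s,g_2)$ collapses to the single-variable function, so the vanishing at $s = -m$ is exactly Theorem~\ref{Theorem:SU(2)}(2) (and symmetrically for $\theta_2 = 0$). If $\theta_1 = \pi$ and $0 < \theta_2 < \pi$ then $\trace(\rho(-I)) = (-1)^{n-1}n$, and the same product-to-sum manipulation gives $\zeta^W_{SU(2)}(s;-I,g_2) = -\frac{1}{2\sin\theta_2}\bigl(S(s+1,\theta_2+\pi) + S(s+1,\theta_2-\pi)\bigr)$, which vanishes at $s=-m$ because $S$ vanishes at the negative odd integer $s+1 = 1-m$. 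Finally $\zeta^W_{SU(2)}(s;-I,-I) = \sum_n n^{-s} = \zeta(s)$ vanishes at negative even integers by the trivial zeros of $\zeta$. Together these cases exhaust all $(\theta_1,\theta_2)$ and complete the argument.

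The main obstacle is the key step, namely pinning down the continued values $C(0,\alpha)$ and $C(-2k,\alpha)$ and verifying that they are genuinely $\alpha$-independent. The bootstrap from the sine series is clean only if one justifies that $\alpha$-differentiation commutes with the meromorphic continuation in $w$, and that the vanishing $S(1-m,\cdot) \equiv 0$ really holds for \emph{all} $\alpha$ (including the degenerate $\alpha \in \{0,\pi\}$, where the series is trivially zero) together with its coincidence with the Riemann-zeta values at $\alpha \equiv 0$. Once these points are secured, everything else is a routine transcription of Theorem~\ref{Theorem:SU(2)}.
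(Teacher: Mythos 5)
Your argument is correct in substance and, at its core, it is the same decomposition as the paper's: the paper expands the character product into the four polylogarithms $Z(s+2,xy)$, $Z(s+2,x^{-1}y^{-1})$, $Z(s+2,xy^{-1})$, $Z(s+2,x^{-1}y)$ with $x=e^{i\theta_1}$, $y=e^{i\theta_2}$, and your expression $\frac{1}{2\sin\theta_1\sin\theta_2}\bigl(C(s+2,\theta_1-\theta_2)-C(s+2,\theta_1+\theta_2)\bigr)$ is literally the same thing, since $C(w,\alpha)=\frac12\bigl(Z(w,e^{i\alpha})+Z(w,e^{-i\alpha})\bigr)$ and $(x-x^{-1})(y-y^{-1})=-4\sin\theta_1\sin\theta_2$; your boundary cases $\theta_j\in\{0,\pi\}$ likewise mirror the paper's reduction $\zeta^W_{SU(2)}(s;g_1,\pm I_2)=\zeta^W_{SU(2)}(s,\pm g_1)$. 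Where you genuinely diverge is the key input. The paper evaluates the two pairs via its Lemma, namely $Z(0,x)+Z(0,x^{-1})=-1$ and $Z(-m,x)+(-1)^mZ(-m,x^{-1})=0$, both immediate from Jonqui\`ere's functional equation (\ref{eq:Milnor}); you instead re-derive $C(0,\alpha)=-\frac12$ and $C(-2k,\alpha)=0$ by differentiating in $\alpha$ the identically vanishing sine series $S(1-m,\cdot)$ supplied by Theorem~\ref{Theorem:SU(2)}(2). That bootstrap is logically sound (Theorem~\ref{Theorem:SU(2)}(2) is already established, so there is no circularity), and it has the merit of treating all positive even $m$ uniformly, whereas the paper's displayed computation does only $s=-2$ explicitly; but it buys nothing structurally and is the source of the only soft spots in your write-up.

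Those soft spots are real but patchable. First, the commutation of $\partial_\alpha$ with the continuation in $w$, which you flag yourself: it does hold, by the joint analyticity in $(s,\theta)$ visible from (\ref{eq:Milnor}), but it is entirely avoidable --- once you observe that $C(w,\alpha)=\frac12\bigl(Z(w,e^{i\alpha})+Z(w,e^{-i\alpha})\bigr)$ is a finite linear combination of entire continuations, the paper's formulas (\ref{eq:Z(0)}) and (\ref{eq:Z-funct}) hand you $C(0,\alpha)=-\frac12$ and $C(-2k,\alpha)=0$ with no differentiation at all. Second, Theorem~\ref{Theorem:SU(2)}(2) gives $S(1-m,\theta)=0$ only for $\theta\in[0,\pi]$, while your argument needs the values of $S$ and $C$ on all of $(0,2\pi)$, since $\theta_1+\theta_2$ can exceed $\pi$ (and your $\theta_1=\pi$ case invokes $S$ at $\theta_2\pm\pi$); this is closed in one line by the oddness and $2\pi$-periodicity of $S(1-m,\cdot)$, evident from the rational-function expression for $Z(1-m,x)$, but it must be said, as your phrase ``for all $\alpha$'' silently overstates what the theorem provides. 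Finally, a small imprecision: bounded partial sums of $\cos(n\alpha)$ give continuation of $C(w,\alpha)$ only to $\Re(w)>0$ by the Dirichlet test; entirety requires the paper's polylogarithm theorem (or (\ref{eq:Milnor})). With these three points repaired, your proof is complete and is a faithful real-variable transcription of the paper's argument.
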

We also give an example of the non-vanishing for the case $r=3$:
for some $g \in SU(2)$, we prove that
$\zeta^W_{SU(2)}(-2;g,g,g) \neq 0$.
These results related with the Lie group $SU(2)$
are given in Section~\ref{SU(2)}.


We report further examples of zeros of Witten zeta functions for infinite groups.

\begin{theorem}\label{Theorem:SU(3)}
$\zeta^W_{SU(3)}(s)=0$ for $s=-1,-2,\dots.$
\end{theorem}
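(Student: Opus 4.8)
The plan is to convert the double series into a single Mellin transform, read off its poles, and reduce the vanishing to a Bernoulli-number identity. By the Weyl dimension formula the irreducible representations of $SU(3)$ are indexed by pairs $(a,b)$ of non-negative integers with degree $\frac12(a+1)(b+1)(a+b+2)$; putting $m=a+1$, $n=b+1$ gives, for $\Re(s)$ large,
\[
\zeta^W_{SU(3)}(s) = 2^s\sum_{m,n\ge1}\bigl(mn(m+n)\bigr)^{-s}.
\]
Inserting $\frac{1}{(m+n)^s}=\frac{1}{\Gamma(s)}\int_0^\infty t^{s-1}e^{-(m+n)t}\,dt$ and summing over $m,n$ (using $\sum_{m\ge1}m^{-s}e^{-mt}=\Li_s(e^{-t})$) yields the representation
\[
\zeta^W_{SU(3)}(s)=\frac{2^s}{\Gamma(s)}\,M(s),\qquad M(s):=\int_0^\infty t^{s-1}\,\Li_s(e^{-t})^2\,dt ,
\]
which, since $M$ is meromorphic, also supplies the analytic continuation to all of $\C$. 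It already exhibits the mechanism: the prefactor $1/\Gamma(s)$ has a simple zero at every non-positive integer, so $\zeta^W_{SU(3)}(-k)$ can be nonzero only if $M(s)$ has a compensating pole there.

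Next I would locate the poles of $M$. Splitting $M=\int_0^1+\int_1^\infty$, the tail is entire in $s$, so all poles come from the behaviour as $t\to0^+$, governed by the classical expansion
\[
\Li_s(e^{-t}) = \Gamma(1-s)\,t^{s-1} + \sum_{k\ge0}\frac{(-1)^k\zeta(s-k)}{k!}\,t^{k}\qquad(0<t<2\pi).
\]
Squaring, multiplying by $t^{s-1}$, and integrating over $(0,1)$, each monomial $c(s)\,t^{\alpha(s)}$ contributes a simple pole $c(s)/(\alpha(s)+1)$. At $s=-k$ with $k\ge1$ exactly two families are singular: the cross term between the singular piece $\Gamma(1-s)t^{s-1}$ and the monomial $t^{k'}$ with $k'=2k+1$, and the regular-times-regular products $t^{j+l}$ with $j+l=k$. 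Since the $\Gamma$- and $\zeta$-coefficients are holomorphic at these negative integers, collecting residues gives the simple pole
\[
\operatorname*{Res}_{s=-k}M(s)=R_k:=-\frac{k!\,\zeta(-3k-1)}{(2k+1)!}+(-1)^k\!\!\sum_{\substack{j+l=k\\ j,l\ge0}}\frac{\zeta(-k-j)\,\zeta(-k-l)}{j!\,l!}.
\]

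Matching this simple pole against the simple zero of $1/\Gamma(s)$ (the residue of $\Gamma$ at $-k$ being $(-1)^k/k!$) gives the clean formula
\[
\zeta^W_{SU(3)}(-k)=2^{-k}(-1)^k\,k!\,R_k ,
\]
so the theorem is equivalent to $R_k=0$ for every $k\ge1$. For odd $k$ this is immediate: then $-3k-1$ is a negative even integer, so $\zeta(-3k-1)=0$, and in every product $\zeta(-k-j)\zeta(-k-l)$ with $j+l=k$ odd exactly one of the two arguments is a negative even integer, so each term carries a trivial zero of $\zeta$. Hence $R_k=0$ termwise when $k$ is odd.

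The remaining case, $k$ even, is where the real work lies and which I expect to be the main obstacle. Now only the pairs with $j$ and $l$ both odd survive, and substituting $\zeta(-n)=(-1)^nB_{n+1}/(n+1)$ (with $B_n$ the Bernoulli numbers) turns $R_k=0$ into the convolution identity
\[
\sum_{\substack{j+l=k\\ j,l\ge0}}\frac{B_{k+j+1}\,B_{k+l+1}}{j!\,l!\,(k+j+1)(k+l+1)}=-\frac{k!\,B_{3k+2}}{(2k+1)!\,(3k+2)} .
\]
This is a quadratic relation of the same type as Euler's classical Bernoulli convolution, and I would prove it by a generating-function computation, extracting the relevant coefficient from a product of two copies of $t/(e^{t}-1)$ and comparing with the contribution of the Mellin integral. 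A useful internal check is $k=2$, where the only surviving pair is $(1,1)$ and the identity reads $\zeta(-3)^2=\zeta(-7)/60$, i.e. $B_4^2/16=-B_8/480$, which indeed holds.
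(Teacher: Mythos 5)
Your argument is correct in outline, and after the continuation step it converges exactly onto the paper's proof: your closed formula $\zeta^W_{SU(3)}(-k)=2^{-k}(-1)^k k!\,R_k$ reproduces, term for term, the paper's evaluation of the special value (multiplying your convolution through by $k!$ gives the paper's $\sum_{k'=0}^{n}\binom{n}{k'}\zeta(-2n+k')\zeta(-n-k')$ together with the same $\frac{(n!)^2}{(2n+1)!}\zeta(-3n-1)$ term), your parity argument disposing of odd $k$ is the paper's, and your final Bernoulli identity is verbatim the paper's Lemma~\ref{lemma}. The one genuine methodological difference is how the special values are reached: the paper quotes Matsumoto's Mellin--Barnes formula \cite{Matsumoto}, which writes $2^s\sum_{m,n\ge1}(mn(m+n))^{-s}$ as a $\Gamma$-factor term plus a finite sum of products $\zeta(2s+k)\zeta(s-k)$ plus a shifted contour integral, and then sets $s=-n$; you instead take the direct Mellin transform $M(s)=\int_0^\infty t^{s-1}\Li_s(e^{-t})^2\,dt$, expand $\Li_s(e^{-t})$ near $t=0$ by the classical Jonqui\`ere/Hurwitz series, and match the resulting simple poles of $M$ against the simple zeros of $1/\Gamma(s)$. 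Your route is self-contained and makes transparent why only the cross term with $k'=2k+1$ and the diagonal $j+l=k$ contribute at $s=-k$; the paper's route outsources the analytic continuation but yields a formula valid in a half-plane rather than only residue data. I checked that your residue bookkeeping (coefficient $2\,k!\,(-1)^{2k+1}\zeta(-3k-1)/(2k+1)!$ times the residue $\tfrac12$ of $1/(2s+2k)$, plus the $(-1)^k$-weighted diagonal terms) is correct and equivalent to the paper's expression.

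The one genuine gap is the Bernoulli convolution identity for even $k$, which you rightly flag as the crux but only sketch. The proposed method --- ``extracting the relevant coefficient from a product of two copies of $t/(e^t-1)$'' --- does not work as stated: the Cauchy product of two copies of $t/(e^t-1)$ yields Euler's classical convolution $\sum_{j}\binom{n}{j}B_jB_{n-j}=-nB_{n-1}-(n-1)B_n$, in which the Bernoulli indices sum to a fixed $n$ with no shift. In your identity the parameter $k$ appears both inside the Bernoulli indices ($B_{k+j+1}$, $B_{k+l+1}$) and in the constraint $j+l=k$, and the terms carry the weights $1/\bigl((k+j+1)(k+l+1)\bigr)$; such shifted, weighted convolutions are of Miki/Gessel type and are substantially harder than a single generating-function product. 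The paper does not prove the identity from scratch either: it closes exactly this gap by citing Chu--Wang \cite{CW}, Theorem~2, with the substitution $\alpha=\gamma=n-1$, $\delta=\varepsilon=1$. To complete your proof you should either invoke that result or develop the genuinely nontrivial generating-function argument; your numerical check at $k=2$ (that $B_4^2/16=-B_8/480$, i.e.\ $\zeta(-3)^2=\zeta(-7)/60$) is correct but is evidence, not a proof.
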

The proof of this theorem is given in Section~\ref{SU(3)}.

The next example is not a Lie group, but
a totally disconnected group.
Let $\Z_p$ be the $p$-adic integer ring for a prime number $p$.
\begin{theorem}\label{Theorem:SL(Zp)}
Suppose $p \neq 2$. 
Then
$\zeta^W_{SL_2(\Z_p)}(s)=0$ for $s=-1, -2$.
\end{theorem}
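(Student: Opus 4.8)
The plan is to reduce everything to the finite congruence quotients and to organize the irreducible representations by level. Since $SL_2(\Z_p)=\varprojlim_n SL_2(\Z/p^n\Z)$ and every continuous irreducible representation of a profinite group has open kernel, each $\rho\in\widehat{SL_2(\Z_p)}$ factors through some $G_n:=SL_2(\Z/p^n\Z)$. Writing $R_n$ for the set of equivalence classes of irreducibles of exact level $n$ (those factoring through $G_n$ but not through $G_{n-1}$), one gets a disjoint decomposition $\widehat{SL_2(\Z_p)}=\bigsqcup_{n\ge 0}R_n$, and hence $\zeta^W_{SL_2(\Z_p)}(s)=\sum_{n\ge 0}\sum_{\rho\in R_n}\deg(\rho)^{-s}$, valid for $\Re(s)$ large. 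The whole problem is thus to determine, for each $n$, the multiset of degrees occurring in $R_n$.

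The next step is the representation-theoretic input. For $n=1$ this is the classical character table of $SL_2(\F_p)$ with $p$ odd: degrees $1$, $p$, $p+1$ (multiplicity $\tfrac{p-3}{2}$), $p-1$ (multiplicity $\tfrac{p-1}{2}$), and $\tfrac{p\pm1}{2}$ (each with multiplicity $2$). For $n\ge 2$ I would apply Clifford theory along the congruence filtration $K_i=\ker(G_n\to SL_2(\Z/p^i\Z))$, equivalently the Kirillov orbit method: the successive quotients $K_i/K_{i+1}\cong\mathfrak{sl}_2(\F_p)$ are abelian, their characters are parametrized via the trace form by $\beta\in\mathfrak{sl}_2$, and the irreducibles of $R_n$ are governed by the $SL_2(\F_p)$-orbit of the reduction $\bar\beta$. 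Regular semisimple split and non-split orbits produce the two principal families, of degrees $p^{n-1}(p+1)$ and $p^{n-1}(p-1)$ respectively, while a bounded number of degenerate orbits produce exceptional families of smaller degree (the analogues of the $\tfrac{p\pm1}{2}$ representations). Throughout, the mass formula $\sum_{\rho\in\widehat{G_n}}\deg(\rho)^2=|G_n|=p^{3n-2}(p^2-1)$ serves both as a derivation tool and as a rigid consistency check on the aggregate degree data.

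With the degrees in hand, I would assemble and continue the series. Every degree has the shape $p^{n-1}e$ with $e$ in the finite set $\{1,p,p\pm1,\tfrac{p\pm1}{2}\}$, and for $n$ beyond a small threshold the level-$n$ multiplicity of each type grows geometrically with ratio a power of $p$; summing these geometric series in $p^{-s}$ collapses the tail into $\dfrac{\Psi(s)}{1-p^{1-s}}$ for a finite exponential sum $\Psi(s)=\sum_e c_e\,e^{-s}$, to be added to the finitely many low-level (entire) contributions. This simultaneously furnishes the meromorphic continuation—the only singularities come from the geometric denominator $1-p^{1-s}$, whose first zero sits at $s=1$, matching the abscissa of convergence—and reduces the theorem to evaluating one explicit meromorphic function at $s=-1$ and $s=-2$.

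The main obstacle is this final evaluation, which is an exact arithmetic identity rather than a soft vanishing. At $s=-2$ the denominator specializes to $1-p^3$ and $\Psi(-2)=\sum_e c_e e^2$ is tightly constrained by the mass formula, while the low-level block contributes an explicit polynomial in $p$; one must check that all these pieces cancel to $0$ as a rational identity in $p$, and similarly at $s=-1$ with denominator $1-p^2$. I expect the real labor to lie in pinning down the exact multiplicities at the low levels (level $2$ and the exceptional families, where the generic geometric pattern is perturbed) and then in verifying the cancellation. Conceptually the crucial point—and the reason the vanishing does not contradict $\zeta^W_{G_n}(-2)=|G_n|\to\infty$—is that the analytic continuation resums the divergent tail over levels to a finite value, precisely as in the Casimir regularization $\text{``}\sum_n n^2\text{''}=0$ underlying the $SU(2)$ computation.
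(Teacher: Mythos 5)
Your outline reproduces the skeleton of the paper's argument, but it stops precisely where the theorem lives. The paper's proof consists of quoting Jaikin-Zapirain's explicit formula $\zeta^W_{SL_2(\Z_p)}(s)=Z_0(s)+Z_\infty(s)$, where $Z_0(s)=\zeta^W_{SL_2(\F_p)}(s)$ collects the level~$\le 1$ representations and $Z_\infty(s)$ is exactly your $\Psi(s)/(1-p^{1-s})$: a finite exponential sum over the level-$2$ degrees, resummed geometrically over all higher levels. The theorem then follows from a two-line evaluation, $Z_0(-2)=p(p^2-1)=-Z_\infty(-2)$ and $Z_0(-1)=p(p+1)=-Z_\infty(-1)$. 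Your level decomposition, Clifford/Kirillov analysis and geometric resummation is a faithful sketch of how such a formula is derived, but since the vanishing at $s=-1,-2$ is an exact arithmetic identity with no soft reason behind it, a proof must either carry out the orbit-method computation to the end or cite the known formula; you do neither, and you say so yourself (``the real labor'' of pinning down multiplicities and verifying the cancellation is deferred). That deferred labor \emph{is} the content of the theorem, so as written the proposal has a genuine gap.

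Two concrete symptoms confirm that your degree data was conjectured rather than derived. First, the guessed degree set is wrong: by the Jaikin-Zapirain formula the exact-level-$n$ degrees for $n\ge 2$ are $p^{n-1}(p-1)$, $p^{n-1}(p+1)$, and $p^{n-2}\frac{p^2-1}{2}$ (the last with multiplicity $4p^{n-1}$); no degrees of the form $p^{n-1}\frac{p\pm1}{2}$ occur beyond level $1$, so the halved families do not propagate the way you assumed. Second, your hope that the mass formula ``tightly constrains'' the evaluation is sound only at $s=-2$: there $\sum_{\rho\in R_n}\deg(\rho)^2=|G_n|-|G_{n-1}|=p^{3n-5}(p^2-1)(p^3-1)$ pins down every level mass, and granted the geometric stability from level $2$ on, this alone forces $Z_0(-2)+\Psi(-2)/(1-p^3)=p(p^2-1)+p(p^2-1)(p^3-1)/(1-p^3)=0$ without knowing individual degrees --- a genuinely nice shortcut you could have completed had you proved the stability. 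But at $s=-1$ the relevant quantity is $\sum_\rho \deg(\rho)$, which is controlled by no mass-type identity (the Frobenius--Schur indicators vary), so there the explicit degree multiset --- i.e., the full computation or the citation --- is indispensable.
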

Now we consider the congruence subgroups.
For a positive integer $m$,
we define a subgroup of $SL_3(\Z_p)$ of finite index by
\[
SL_3(\Z_p)[p^m] = \ker(SL_3(\Z_p)  \rightarrow SL_3(\Z_p/(p^m))).
\]
\begin{theorem}
Suppose $p \neq 3$.
\begin{itemize}
\item[{\rm(1)}]
\begin{align*}
\zeta_{SL_3(\Z_p)[p^m]}^W(s)
&= p^{8m} \frac{(1-p^{-2-s})(1-p^{-1-s})}{(1-p^{1-2s})(1-p^{2-3s})}\\
& \times \left(
1+(p^{-1}+p^{-2}) p^{-s} + (1+p^{-1}) p^{-2s}+p^{-2-3s}
\right).
\end{align*}
\item[{\rm(2)}]
$\zeta_{SL_3(\Z_p)[p^m]}^W(s)=0$ for $s=-1, -2$.
\item[{\rm(3)}]
\[
\zeta^W_{SL_3(\Z_1)[1^m]}(s) = \frac{(s+1)(s+2)}{(s-\frac12)(s-\frac23)}.
\]
\end{itemize}
\end{theorem}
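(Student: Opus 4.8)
The plan is to prove part (1) first; parts (2) and (3) will then follow by substitution and a limiting argument. Write $G_m = SL_3(\Z_p)[p^m]$. For $p \neq 3$ the congruence subgroups $G_m$ ($m \ge 1$) are saturable pro-$p$ groups whose associated $\Z_p$-Lie lattice is $\mathfrak{g}_m = p^m\,\mathfrak{sl}_3(\Z_p)$, so the Kirillov orbit method is available. I would use it in the following form: the continuous irreducible complex representations of $G_m$ are in bijection with the coadjoint orbits of $G_m$ on the Pontryagin dual $\mathfrak{g}_m^\ast = \operatorname{Hom}(\mathfrak{g}_m, \Q_p/\Z_p)$, and the representation $\rho$ attached to an orbit $\Omega$ satisfies $\deg(\rho)^2 = |\Omega|$. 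Hence the Witten zeta function becomes an orbit sum,
\[
\zeta^W_{G_m}(s) = \sum_{\Omega \subset \mathfrak{g}_m^\ast} |\Omega|^{-s/2} = \sum_{x \in \mathfrak{g}_m^\ast} |\Omega_x|^{-1-s/2},
\]
the second equality being the standard reweighting of an orbit sum as a weighted sum over points.

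The next step is to turn this into an Igusa-type $p$-adic integral over $\mathfrak{sl}_3$. Fixing a $\Z_p$-basis $e_1,\dots,e_8$ of $\mathfrak{sl}_3$, the orbit size at $x$ is $|\Omega_x| = [\mathfrak{g}_m^\ast : \operatorname{Stab}(x)]$ and is governed by the elementary divisors of the antisymmetric commutator matrix $R(x) = \big(x([e_i,e_j])\big)_{1 \le i,j \le 8}$; this writes $\zeta^W_{G_m}(s)$ as an integral controlled by the valuations of the minors of $R$. Because $\mathfrak{g}_m = p^{m}\,\mathfrak{sl}_3(\Z_p)$, rescaling the lattice multiplies every orbit size by a fixed power of $p$ and produces the prefactor $p^{8m}$ with $8 = \dim \mathfrak{sl}_3$; it therefore suffices to evaluate the remaining $m$-independent integral.

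This evaluation is the heart of the matter and the step I expect to be delicate. It amounts to stratifying $\mathfrak{sl}_3$ by the rank (equivalently, the elementary-divisor type) of $R(x)$. Since $R(x)$ is $8\times 8$ and antisymmetric, its rank equals the dimension of the coadjoint orbit through $x$, which for $\mathfrak{sl}_3$ lies in $\{0,4,6\}$: the generic value $6$ on the regular locus (regular elements have a $2$-dimensional centralizer) and the value $4$ on the subregular locus. Summing the geometric series in $p^{-s}$ over the level filtration on each stratum yields one denominator factor per stratum: the regular orbits (half-dimension $3$) give $1-p^{2-3s}$ and the subregular orbits (half-dimension $2$) give $1-p^{1-2s}$, while the measures of the two strata, together with the subregular locus where the discriminant of $\mathfrak{sl}_3$ enters, assemble into the numerator $(1-p^{-2-s})(1-p^{-1-s})$ and the explicit degree-three polynomial in $p^{-s}$ of part (1). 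The careful orbit count on the subregular stratum is the computation I expect to require the most care.

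Granting the closed formula, parts (2) and (3) are immediate. For (2), at $s=-2$ the factor $1-p^{-2-s}$ vanishes and at $s=-1$ the factor $1-p^{-1-s}$ vanishes, while $(1-p^{1-2s})(1-p^{2-3s})$ and the polynomial factor are regular and nonzero at both points; hence $\zeta^W_{G_m}(-1)=\zeta^W_{G_m}(-2)=0$. For (3), I would put $p=e^{\epsilon}$ and let $\epsilon\to 0$. Each factor $1-p^{-a-bs}=1-e^{-\epsilon(a+bs)}$ is asymptotic to $\epsilon(a+bs)$, so the numerator and denominator each contribute a factor $\epsilon^2$, these cancel, $p^{8m}\to 1$, and the polynomial factor tends to $1+2+2+1=6$. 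Collecting the linear terms gives
\[
\lim_{p\to 1}\zeta^W_{G_m}(s) = 6\,\frac{(s+2)(s+1)}{(2s-1)(3s-2)} = \frac{(s+1)(s+2)}{(s-\tfrac12)(s-\tfrac23)},
\]
which is the asserted absolute limit.
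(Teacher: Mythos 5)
Your parts (2) and (3) are correct and essentially identical to what the paper does: given the factorized formula of part (1), the factor $1-p^{-2-s}$ vanishes at $s=-2$ and $1-p^{-1-s}$ vanishes at $s=-1$ while the remaining factors are finite and nonzero there, and your limit computation (each factor $1-p^{-a-bs}\sim \epsilon(a+bs)$ for $p=e^{\epsilon}$, the polynomial factor tending to $6$, and $(2s-1)(3s-2)=6(s-\tfrac12)(s-\tfrac23)$) is exactly the intended absolute limit. The divergence is in part (1). The paper does not derive the closed formula at all: it quotes from Avni--Klopsch--Onn--Voll \cite{AKOV} the expression with numerator $1+u(p)p^{-3-2s}+u(p^{-1})p^{-2-3s}+p^{-5-5s}$, where $u(X)=X^3+X^2-X-1-X^{-1}$, and the only work carried out in the paper is the algebraic verification that this numerator factors as $(1-p^{-2-s})(1-p^{-1-s})\bigl(1+(p^{-1}+p^{-2})p^{-s}+(1+p^{-1})p^{-2s}+p^{-2-3s}\bigr)$. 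You instead propose to re-derive the AKOV formula ab initio by the Kirillov orbit method.

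That is indeed the machinery behind \cite{AKOV} and \cite{AKOV2}, but your writeup leaves the decisive step unexecuted: the stratification of $\mathfrak{g}^\ast$ by the elementary-divisor type of the commutator matrix $R(x)$ and the evaluation of the resulting Poincar\'e series, which you explicitly defer (``the step I expect to be delicate,'' ``the computation I expect to require the most care''). All the content of part (1) lives in that step. The coarse data you do invoke --- the possible ranks $\{0,4,6\}$ of $R(x)$, hence the denominator factors $1-p^{2-3s}$ and $1-p^{1-2s}$ --- cannot determine the numerator: the paper's companion computation for $SU_3(\Z_p)[p^m]$ has the identical denominator and the same orbit-dimension strata, yet a different numerator, namely $(1-p^{-2-s})(1-p^{-s})(1+p^{-1-s})\bigl(1+(1-p^{-1}+p^{-2})p^{-s}+p^{-2-2s}\bigr)$, so one must genuinely count orbits by elementary divisors over the subregular locus, not merely by rank. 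Until that computation is carried out (or the AKOV formula is cited, as the paper does), part (1) --- and with it your whole argument --- is a plan rather than a proof. A smaller inaccuracy: for $p=2$ the group $SL_3(\Z_2)[2]$ contains torsion (e.g.\ $\mathrm{diag}(-1,-1,1)$), hence is not saturable, so the Kirillov correspondence in the form you state requires $m\ge 2$ when $p=2$; this is precisely why \cite{AKOV2} restricts to ``permissible'' $m$, a hypothesis your blanket claim for all $m\ge1$, $p\neq3$ overlooks.
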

Here we interpret that
if $\zeta_{SL_3(\Z_p)[p^m]}^W(s)$ has an expression
as an analytic function on $p$,
and there is a limit $p\to 1$,
then its limit is denoted by
\[
\zeta^W_{SL_3(\Z_1)[1^m]}(s)  = \lim_{p \to 1} \zeta_{SL_3(\Z_p)[p^m]}^W(s).
\]
These results on totally disconnected groups
are given in Section~\ref{Zp}.

\section{$SU(2)$}\label{SU(2)}

\subsection{Parametrization of irreducible representations of $SU(2)$}
The set of equivalence classes, $\widehat{G}$, of irreducible unitary representations of $G=SU(2)$
is parametrized by the set of natural numbers.
For a natural number, we denote by $\rho=\rho_n \in \widehat{G}$,
the corresponding irreducible representation of $G$.
For a $g=
\left( \begin{array}{cc} 
e^{i\theta} & 0 \\
0 & e^{-i\theta}
\end{array} \right) \in G$,
we have the character formula
\begin{equation}
\trace(\rho(g))= e^{i(n-1)\theta}+e^{i(n-3)\theta}+\cdots+e^{i(3-n)\theta}+e^{i(1-n)\theta}
\end{equation}
and the degree
\begin{equation}
\deg(\rho) = \trace(\rho(I_2)) = n,
\end{equation}
where $I_2 = \textstyle \left( \begin{array}{cc} 1& 0 \\ 0 & 1 \end{array} \right) \in SU(2)$
is the identity matrix.
We also see that
$\trace(\rho(-I_2)) = (-1)^{n-1}n$.

We start from $g = \pm I_2 \in SU(2)$.
In these cases,
$\zeta_{SU(2)}^W(s,g)$ is written in terms of Riemann zeta function.
We see that
$\zeta_{SU(2)}^W(s, I_2) =\zeta(s)$,
and
\begin{equation}
\zeta_{SU(2)}^W(s, -I_2) = \sum_{n=1}^\infty \frac{(-1)^{n-1}}{n^s} = (1-2^{1-s}) \zeta(s).
\end{equation}

\subsection{Poly-logarithm function}
We recall the poly-logarithm
\begin{align*}
Z(s,x) 
&= \sum_{n=1}^\infty \frac{x^n}{n^s},
\end{align*}
which is written also as $\Li_s(x)$ in literature.
This series converges if
$\left| x \right|<1$ and $s \in \C$,
or $\left| x \right|=1$ and $\Re(s)>1$.
In the following, we restrict to the case $\left| x \right|=1$.
\begin{theorem}
Suppose $\left| x \right|=1$ and $x \neq 1$.
Then $Z(s,x)$ is analytically continued to a holomorphic function on $s \in \C$.
Moreover, for every non-negative integer $m$,
the function $Z(-m,x)$ can be expressed by a rational function in $x$.
The first several examples are
\[
Z(0,x) = \frac{x}{1-x},
\quad
Z(-1, x) = \frac{x}{(1-x)^2}, 
\quad
Z(-2,x) = \frac{x(1+x)}{(1-x)^3},
\ldots.
\]
\end{theorem}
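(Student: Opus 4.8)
The plan is to use a Hankel-type contour integral, exactly as in the classical treatment of $\zeta(s)$, the one essential difference being that the kernel is now holomorphic at the origin. Write $g(z) = \dfrac{x}{e^z - x} = \dfrac{x e^{-z}}{1 - x e^{-z}}$. Since $\left| x \right|=1$ and $x \neq 1$, the denominator $1 - x e^{-z}$ does not vanish at $z=0$, so $g$ is holomorphic in a neighbourhood of the origin; its poles lie at $z = \log x + 2\pi i\, k$ with $k \in \Z$, all purely imaginary and nonzero, hence bounded away from the positive real axis. First I would record the Mellin representation
\[
\Gamma(s)\, Z(s,x) = \int_0^\infty t^{s-1} g(t)\, dt, \qquad \Re(s) > 0,
\]
which follows by expanding $g(t) = \sum_{k\ge 1} x^k e^{-kt}$ and integrating term by term; the boundary case $\left| x \right|=1$ is reached either by Abel summation or by analytic continuation from $\left| x \right|<1$, and convergence at $t=0$ uses $\Re(s)>0$ together with the boundedness of $g$ near $0$, while convergence at $t=\infty$ uses the exponential decay of $g$.

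Next I would deform $[0,\infty)$ to a Hankel contour $H$ that comes in from $+\infty$ above the positive real axis, encircles the origin once counterclockwise on a small circle, and returns to $+\infty$ below the axis, with the branch of $(-z)^{s-1}$ normalized to be real on the negative real axis. Because $g$ has no singularity in the region swept out (this is where $x\neq 1$ enters), the deformation is legitimate, and comparing $(-z)^{s-1}$ on the two banks of the cut gives
\[
\int_H (-z)^{s-1} g(z)\, dz = -2 i \sin(\pi s)\, \Gamma(s)\, Z(s,x).
\]
Using $\sin(\pi s)\,\Gamma(s) = \pi/\Gamma(1-s)$ this rearranges to
\[
Z(s,x) = -\frac{\Gamma(1-s)}{2\pi i} \int_H (-z)^{s-1} g(z)\, dz.
\]
The contour integral is entire in $s$ (the integrand is holomorphic near $0$ and decays at $+\infty$), so this furnishes the continuation. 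The only candidate poles come from $\Gamma(1-s)$ at $s=1,2,\dots$; but at a positive integer $s=n$ the factor $(-z)^{n-1}$ is single-valued and $(-z)^{n-1}g(z)$ is holomorphic at the origin, so the two banks cancel and the circle contributes no residue, forcing the contour integral to vanish and cancelling the pole. Hence $Z(s,x)$ is entire, which is the first assertion. (This is exactly where $x\neq1$ matters: for $x=1$ the kernel $g$ acquires a simple pole at $0$, and one recovers the pole of $\zeta$ at $s=1$.)

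Finally I would evaluate at $s=-m$ for an integer $m\ge0$. Then $\Gamma(1-s)=m!$, the power $(-z)^{-m-1}$ is single-valued so the banks again cancel, and the contour collapses to the small circle, which by the residue theorem returns the Taylor coefficient of $g$: writing $g(z)=\sum_{j\ge0} g_j z^j$ one gets $Z(-m,x) = (-1)^m\, m!\, g_m$. To turn this into a rational function I would observe that $g(z)=h\!\left(x e^{-z}\right)$ with $h(u)=\dfrac{u}{1-u}$, and that differentiating through the substitution $u=x e^{-z}$, where each $d/dz$ acts as $-\,u\,d/du$, gives
\[
\frac{d^m}{dz^m}\Big|_{z=0} h\!\left(x e^{-z}\right)
= (-1)^m \left(x\frac{d}{dx}\right)^m h(x).
\]
Since $m!\,g_m=\frac{d^m}{dz^m}\big|_{z=0} g(z)$, the two signs cancel and one obtains the closed form
\[
Z(-m,x) = \left(x\frac{d}{dx}\right)^m \frac{x}{1-x},
\]
manifestly rational (and exactly the analytic regularization of the divergent $\sum_{k\ge1} k^m x^k$). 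Specializing $m=0,1,2$ and differentiating reproduces $\frac{x}{1-x}$, $\frac{x}{(1-x)^2}$, and $\frac{x(1+x)}{(1-x)^3}$. I expect the only real friction to be the sign and branch bookkeeping in the Hankel step; these are pinned down unambiguously by demanding that $m=0$ return $Z(0,x)=\frac{x}{1-x}$. An alternative to the last step is to note that the recursion $x\,\partial_x Z(s,x)=Z(s-1,x)$, valid termwise for $\Re(s)>1$, persists after continuation, so that induction from $Z(0,x)=\frac{x}{1-x}$ yields the same closed form.
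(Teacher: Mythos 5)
Your proof is correct, but it follows a genuinely different route from the paper. The paper's argument is entirely elementary: it splits off the first terms of the series, shifts the summation index, and expands $(1+n^{-1})^{-s}$ by the binomial series to obtain the recursion $(1-x)Z(s,x)=x+x^2(2^{-s}-1)+x\sum_{k\ge 1}\binom{-s}{k}\bigl(Z(s+k,x)-x\bigr)$, whose right-hand side converges on $\Re(s)>0$; iterating extends $Z(s,x)$ strip by strip to all of $\C$ (here $x\ne 1$ enters only through division by $1-x$), and at $s=-m$ the binomial series terminates, giving a finite recursion from which rationality of $Z(-m,x)$ follows by induction. You instead adapt the Riemann--Hankel method: the Mellin representation $\Gamma(s)Z(s,x)=\int_0^\infty t^{s-1}\frac{xe^{-t}}{1-xe^{-t}}\,dt$, deformation to a Hankel contour, and the identity $Z(s,x)=-\frac{\Gamma(1-s)}{2\pi i}\int_H(-z)^{s-1}g(z)\,dz$ — I checked your sign and branch bookkeeping, including $Z(-m,x)=(-1)^m m!\,g_m$ and the chain-rule step $d/dz=-u\,d/du$, and it is all consistent. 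Your method buys several things the paper's does not: entirety in a single stroke rather than strip by strip; a structural explanation of where $x\ne 1$ matters (the kernel is regular at $z=0$, so the $\Gamma(1-s)$ pole at $s=1$ is cancelled, whereas $x=1$ recovers the pole of $\zeta$); and the uniform closed form $Z(-m,x)=\bigl(x\frac{d}{dx}\bigr)^m\frac{x}{1-x}$, which makes rationality manifest with denominator $(1-x)^{m+1}$ instead of leaving it implicit in a recursion. It also dovetails with Jonqui\`ere's functional equation (\ref{eq:Milnor}), which the paper invokes later and which is classically proved from this very contour representation. The paper's approach, in exchange, avoids all contour and branch-cut machinery and yields an effective recursion for computing the listed examples directly. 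Two small points of polish: for $|x|=1$ and $\Re(s)>1$ the term-by-term integration is justified outright by Fubini (since $\sum_k\int_0^\infty t^{\sigma-1}e^{-kt}dt=\Gamma(\sigma)\zeta(\sigma)<\infty$), so your Abel-summation hedge is unnecessary; and your appeal to the alternative recursion $x\,\partial_x Z(s,x)=Z(s-1,x)$ for $|x|=1$ needs the observation that the Hankel formula extends $Z(s,x)$ holomorphically in $x$ to a neighborhood of the unit circle minus $\{1\}$, which your representation does in fact provide.
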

\begin{proof}
For $\Re(s)>1$, we have
\begin{align*}
Z(s,x) 
&= x + \frac{x^2}{2^s} + \sum_{n=3}^\infty \frac{x^n}{n^s} \\
&= x + \frac{x^2}{2^s} + \sum_{n=2}^\infty \frac{x^{n+1}}{(n+1)^s} \\
&= x + \frac{x^2}{2^s} + \sum_{n=2}^\infty x^{n+1} n^{-s}(1+n^{-1})^{-s} \\
&= x + \frac{x^2}{2^s} + \sum_{n=2}^\infty x^{n+1} n^{-s} \sum_{k=0}^\infty \binom{-s}{k} n^{-k}
\\
&= x + \frac{x^2}{2^s} + x \sum_{k=0}^\infty  \binom{-s}{k} ( Z(s+k,x) -x) \\
&= x + \frac{x^2}{2^s} + x(Z(s,x)-x) + x \sum_{k=1}^\infty  \binom{-s}{k} ( Z(s+k,x) -x).
\end{align*}
This shows
\begin{align}
(1-x) Z(s,x) 
&= x+x^2(2^{-s}-1) + x \sum_{k=1}^\infty  \binom{-s}{k} ( Z(s+k,x) -x).
\end{align}
By the estimates of binomial coefficients,
the right-hand side converges absolutely on the right-half plane $\Re(s) > 0$.
This shows the analytic continuation of $Z(s,x)$ to $\Re(s)>0$.
Repeating this argument, we obtain the analytic continuation to whole $s \in\C$.
To substitute $s=-m$ with $m=0,1,\dots$,
we have the recursion equation
\begin{align}
(1-x) Z(-m,x) = x+x^2(2^m-1) + x \sum_{k=1}^m  \binom{m}{k} ( Z(-(m-k),x) -x).
\end{align}
\end{proof}
First several examples show
\begin{align*}
Z(-3,x) & = \frac{x(1+4x+x^2)}{(1-x)^4},
\qquad
Z(-4,x) = \frac{x(1+x)(1+10x+x^2)}{(1-x)^5},\\
Z(-5,x) &= \frac{x(1+26x+66x^2+26x^3+x^4)}{(1-x)^6}.
\end{align*}
These examples seem to show 
\begin{lemma}
Suppose $\left| x \right| =1$ with $x \neq 1$.
Then
\begin{align}\label{eq:Z(0)}
Z(0,x) + Z(0,x^{-1}) = -1,
\end{align}
and for every positive integer $m$, 
\begin{equation}\label{eq:Z-funct}
Z(-m,x) + (-1)^m Z(-m, x^{-1}) = 0.
\end{equation}
\end{lemma}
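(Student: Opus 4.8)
The plan is to reduce both statements to identities of rational functions and then exploit the behaviour of the Euler operator $x\frac{d}{dx}$ under the inversion $x\mapsto x^{-1}$. First I would record that for $|x|<1$ the series $\sum_{n=1}^\infty n^m x^n$ converges and equals $Z(-m,x)$; indeed $Z(-m,x)=\bigl(x\tfrac{d}{dx}\bigr)^m\frac{x}{1-x}$, since $x\frac{d}{dx}x^n=nx^n$. By the preceding theorem $Z(-m,x)$ is a rational function of $x$, so it coincides with this expression for all $x\neq 1$. Since both sides of the asserted identities are rational in $x$, it suffices to verify them as rational-function identities; here one must remember that $Z(-m,x^{-1})$ denotes the rational function evaluated at $x^{-1}$ (the defining series diverges when $|x|<1$), so no convergence issue arises.

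Next I would establish the key operator identity. Writing $D=\frac{d}{dx}$ and abbreviating the Euler operator by $\vartheta=xD$, a direct chain-rule computation gives, for any rational function $g$,
\[
\vartheta\bigl[g(x^{-1})\bigr]=-\,(\vartheta g)(x^{-1}),
\]
because $\frac{d}{dx}g(x^{-1})=-x^{-2}g'(x^{-1})$ and hence $x\frac{d}{dx}g(x^{-1})=-x^{-1}g'(x^{-1})=-\bigl(yg'(y)\bigr)\big|_{y=x^{-1}}$. Iterating this relation $m$ times yields
\[
\vartheta^m\bigl[g(x^{-1})\bigr]=(-1)^m\,(\vartheta^m g)(x^{-1}).
\]
Applying this with $g=h$, where $h(x)=\frac{x}{1-x}$, gives $Z(-m,x^{-1})=(\vartheta^m h)(x^{-1})=(-1)^m\,\vartheta^m\bigl[h(x^{-1})\bigr]$.

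The final step is to feed in the elementary relation $h(x^{-1})=\frac{x^{-1}}{1-x^{-1}}=\frac{1}{x-1}=-h(x)-1$, valid because $h(x)+1=\frac{1}{1-x}$. For $m\ge 1$ the operator $\vartheta^m$ annihilates the constant $-1$, so $\vartheta^m[h(x^{-1})]=-\vartheta^m h(x)=-Z(-m,x)$, whence $Z(-m,x^{-1})=(-1)^{m+1}Z(-m,x)$, which rearranges to $Z(-m,x)+(-1)^mZ(-m,x^{-1})=0$. For $m=0$ the operator is the identity and the constant survives: $Z(0,x^{-1})=h(x^{-1})=-Z(0,x)-1$, giving $Z(0,x)+Z(0,x^{-1})=-1$. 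I expect the only real care to be needed in the bookkeeping of the first paragraph, namely matching the analytic continuation in $s$ to the rational function and to the operator expression and fixing the meaning of $Z(-m,x^{-1})$; the operator computation itself is short, and the governing sign $(-1)^m$ emerges automatically from the inversion.
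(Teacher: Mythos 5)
Your proof is correct, but it takes a genuinely different route from the paper's. The paper deduces the case $m\ge 1$ from Jonqui\`ere's functional equation (cited from Milnor),
\[
e^{-\pi i s/2} Z(s,e^{i\theta}) + e^{\pi i s/2} Z(s,e^{-i\theta})
= \frac{(2\pi)^{s}}{\Gamma(s)} \zeta\bigl(1-s, \tfrac{\theta}{2\pi}\bigr),
\]
where setting $s=-m$ kills the right-hand side because $1/\Gamma(-m)=0$, and the phases $e^{\pm \pi i m/2}$ produce the sign $(-1)^m$; your argument is instead purely algebraic, via $Z(-m,x)=\vartheta^m\frac{x}{1-x}$ with $\vartheta=x\frac{d}{dx}$, the anticommutation $\vartheta\bigl[g(x^{-1})\bigr]=-(\vartheta g)(x^{-1})$, and $h(x^{-1})=-h(x)-1$. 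The one point you rightly flag --- that the continuation in $s$ at $s=-m$ on the unit circle agrees with the rational function $\vartheta^m h$ --- does need an argument, since the series defining $Z(-m,x)$ diverges on $\left|x\right|=1$, so there is no open set of common convergence on which to invoke the identity theorem directly; but it closes easily using the paper's own recursion: the derivation of $(1-x)Z(-m,x)=x+x^2(2^m-1)+x\sum_{k=1}^m\binom{m}{k}\bigl(Z(-(m-k),x)-x\bigr)$ runs verbatim for $\left|x\right|<1$ at $s=-m$ (the binomial sum is finite there), so $R_m:=\vartheta^m h$ satisfies the same recursion as a rational identity, and since $R_0(x)=x/(1-x)=Z(0,x)$, induction identifies $Z(-m,x)$ with $R_m(x)$ on the circle. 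With that patch your proof is complete, and it has real advantages: it is self-contained (no appeal to the Jonqui\`ere--Milnor formula), it establishes the identities as rational-function identities valid for all $x\neq 0,1$ rather than only on $\left|x\right|=1$, and it treats $m=0$ and $m\ge 1$ uniformly --- note that the paper's displayed proof only covers $m\ge 1$ (at $s=0$ the functional equation is a $0\cdot\infty$ limit, since $\zeta(1-s,a)$ has a pole at $s=0$), the case (\ref{eq:Z(0)}) being implicit in the formula $Z(0,x)=x/(1-x)$. What the paper's route buys is economy in context: the same functional equation is differentiated at $s=-1$ shortly afterwards to compute $\frac{\partial \zeta^W_{SU(2)}}{\partial s}(-2,g)$, so introducing it in the proof of the lemma does double duty.
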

\begin{proof}
We start from [Jonqui\`ere 1880]
\begin{equation}\label{eq:Milnor}
e^{-\pi i s/2} Z(s,e^{i\theta}) + e^{\pi i s/2} Z(s,e^{-i\theta})
= \frac{(2\pi)^{s}}{\Gamma(s)} \zeta(1-s, \frac{\theta}{2\pi})
\end{equation}
in Milnor \cite{M}.
Putting $s=-m$ with $m=1,2,\dots$,
we have
\[
e^{\pi i m/2} Z(-m, e^{i\theta}) + e^{-\pi i m/2} Z(-m, e^{-i\theta}) =0.
\]
\end{proof}
We remark that
$Z(0,1) = \zeta(0)=-1/2$.
In this sense, the formula (\ref{eq:Z(0)}) is valid also for $x=1$.


\subsection{An example}

\begin{equation}
Z(-1,e^{i\theta}) 
= \frac{1}{(e^{-i\theta/2}- e^{i \theta/2})^2} 
= -\frac{1}{4 \sin^2(\theta/2)}.
\end{equation}
and this shows 
\begin{equation}
\label{eq:polylog-even}
\Li_{-1}(e^{-i\theta})=\Li_{-1}(e^{i\theta}),
\end{equation}
an even function in $\theta$.

\subsection{Proof of Theorem~\ref{Theorem:SU(2)}(1) and analytic continuation}

Now we consider regular elements in $SU(2)$.
Suppose $0 < \theta < \pi$.
Then
we have,
for $\Re(s)>1$,
\begin{align*}
\zeta_{SU(2)}^W\left( s, 
\left( \begin{array}{cc} 
e^{i\theta} & 0 \\
0 & e^{-i\theta}
\end{array} \right)
 \right)
&=
\sum_{n=1}^\infty \frac{e^{i n\theta}-e^{-i n\theta}}{e^{i\theta}-e^{-i \theta}}\frac{1}{n} n^{-s} \\
&=\frac{1}{e^{i\theta}-e^{-i \theta}} \sum_{n=1}^\infty 
\left( \frac{e^{in\theta}}{n^{s+1}} - \frac{e^{-in\theta}}{n^{s+1}} \right) 
\\
&=
\frac{1}{e^{i\theta}-e^{-i \theta}}\left\{ Z(s+1,e^{i\theta}) - Z(s+1,e^{-i\theta}) \right\} \\
&=
\frac{1}{2i \sin \theta} \left\{ Z(s+1,e^{i\theta}) - Z(s+1,e^{-i\theta}) \right\},
\end{align*}
and the right-hand side has meromorphic continuation to whole $s \in \C$.

Note that we interpret 
\begin{equation}
\frac{\sin(n\theta)}{n \sin \theta} = \left\{
\begin{array}{ll}
1 & \mbox{ if } \theta = 0, \\
(-1)^{n-1} & \mbox{ if } \theta= \pi.
\end{array}
\right.
\end{equation}


\subsection{Proof of Theorem~\ref{Theorem:SU(2)}(2); vanishing}

For $g = \pm I_2$
and for positive even integer $m$,
we obtain
$\zeta_{SU(2)}^W(-m,\pm I_2)=0$ from $\zeta(-m)=0$.

For $g \neq \pm I_2$,
suppose $0<\theta<\pi$.
Then for a positive integer $m$, we have
\begin{equation}
\zeta_{SU(2)}^W(-m, g) 
=
\frac{1}{2i \sin \theta} \left( Z(1-m,e^{i\theta}) - Z(1-m,e^{-i\theta}) \right).
\end{equation}
This is zero for even $m$ by the formula (\ref{eq:Z-funct}).

\subsection{Proof of Theorem~\ref{Theorem:SU(2)}(2), first derivative}

We see that 
\begin{equation}
\frac{1}{\Gamma(s)} =\frac{s(s+1)}{\Gamma(s+2)}
\end{equation}
shows that
\begin{equation}
\frac{1}{\Gamma(s)} =-(s+1) + O((s+1)^2), \quad (s\to -1).
\end{equation}

We again start from the formula (\ref{eq:Milnor})
\[
e^{-\pi i s/2} Z(s,x) + e^{\pi i s/2} Z(s,x^{-1})
= \frac{(2\pi)^{s}}{\Gamma(s)} \zeta(1-s, \frac{\theta}{2\pi})
\]
with $x=e^{i\theta}$.
Taking $\left.\frac{\partial}{\partial s}\right|_{s=-1}$ in this formula,
we have
\begin{align*}
& i \frac{\partial Z}{\partial s} (-1,x) 
+ (-i) \frac{\partial Z}{\partial s} (-1,x^{-1}) \\
& \quad + (-\pi i/2)(i) Z(-1,x) + (\pi i/2)(-i) Z(-1,x^{-1}) 
= (2\pi)^{-1} (-1) \zeta(2,\frac{\theta}{2\pi}).
\end{align*}
Then
\begin{align}
i\times 2 i \sin \theta \times \frac{\partial \zeta_{SU(2)}^W}{\partial s} (-2,g) 
= -\pi Z(-1,e^{i\theta})
- \frac{1}{2\pi} \zeta(2,\frac{\theta}{2\pi}),
\end{align}
and
\begin{align}
4\pi \sin \theta \times \frac{\partial \zeta_{SU(2)}^W}{\partial s} (-2,g) 
= 2\pi^2 L(-1,e^{i\theta})
+ \zeta(2,\frac{\theta}{2\pi})
\end{align}

We have
\begin{equation}
\zeta(2,t) + \zeta(2,1-t)= \frac{\pi^2}{\sin^2(\pi t)}
\end{equation}
since the left-hand side is equal to
\begin{equation}
\sum_{n=0}^\infty \frac{1}{(n+t)^2}
+ \sum_{n=0}^\infty \frac1{(n+1-t)^2} 
=
\sum_{n=-\infty}^\infty \frac{1}{(n+t)^2} 
\end{equation}
which is equal to the right-hand side.
This shows

\begin{align}
8\pi \sin \theta \times \frac{\partial \zeta_{SU(2)}^W}{\partial s} (-2,g)
= 
\zeta(2,\frac{\theta}{2\pi})-\zeta(2,1-\frac{\theta}{2\pi}) >0
\end{align}
since $\frac{\theta}{2\pi} < 1 -\frac{\theta}{2\pi}$.

\subsection{Proof of Theorem~\ref{Theorem:SU(2)}(3)}

\begin{equation}
\zeta_{SU(2)}^W(-1, I_2) = \zeta(-1) =  -\frac1{12}
\end{equation}
and 
\begin{align}
\zeta_{SU(2)}^W\left(-1, 
\left( \begin{array}{cc} 
e^{i\theta} & 0 \\
0 & e^{-i\theta}
\end{array} \right) \right) 
& = \frac{Z(0, x) - Z(0, x^{-1})}{x-x^{-1}} \nonumber \\
& = \frac{-x}{(1-x)^2}
= \frac{1}{4 \sin^2(\theta/2)},
\end{align}
where $x=e^{i\theta}$ for all $0<\theta\le \pi$.

\subsection{An average over the group}

Let $G$ be a finite group.
The normalized Haar measure $dg$ on $G$
is, by definition,
\begin{equation}
\int_G f(g) dg = \frac{1}{\left| G \right|} \sum_{g \in G} f(g).
\end{equation}
Then we see that, for all $s \in \C$,
\begin{align}
\int_G \zeta_G^W(s,g) dg =1,
\end{align}
since
the left-hand side is equal to
\begin{align}
&= \sum_{\rho \in \widehat{G}} \left(\int_G \trace(\rho(g)) dg\right) \deg(\rho)^{-s-1},
\end{align}
where the average is non-zero only for the trivial representation $\rho$.

Now we consider
the case where $G$ is
a compact group
which  is not necessarily a finite group.
Again let $dg$ be the normalized Haar measure of $G$
so that $\int_G dg =1$.
We ask the value 
\begin{equation}
\int_G \zeta_G^W(s,g) dg.
\end{equation}
We can give some example;
\begin{align}
\int_{SU(2)} \zeta_{SU(2)}^W(-2,g) dg &=0, \\
\int_{SU(2)} \zeta_{SU(2)}^W(-1,g) dg &=1.
\end{align}
The latter formula is proved by the Weyl integral formula;
\begin{align}
\int_{SU(2)} \zeta_{SU(2)}^W(-1,g) dg
= \int_0^{\pi} \zeta_{SU(2)}^W(-1,\left( \begin{array}{cc} 
e^{i\theta} & 0 \\
0 & e^{-i\theta}
\end{array} \right)
) \frac{2}{\pi} \sin^2\theta \ d\theta
=1.
\end{align}


\subsection{$r=2$}
We now discuss the properties of a generalization
of Witten zeta functions with several characters.
We give a proof of Theorem~\ref{r=2}.

\begin{proof}
\[
\trace(\rho(g_1))=\frac{x^n-x^{-n}}{x-x^{-1}},
\trace(\rho(g_2))=\frac{y^n-y^{-n}}{y-y^{-1}}
\]
with $x=e^{i \theta_1}$, $y=e^{i\theta_2}$.
In the cases $g_2=\pm I_2$,
we have
\begin{align}
\zeta^W_{SU(2)}(s,g_1, I_2) &= \zeta^W_{SU(2)}(s,g_1), \\
\zeta^W_{SU(2)}(s,g_1, -I_2) &= \zeta^W_{SU(2)}(s,-g_1), 
\end{align}
Then the problem on the special values is reduced to the case
treated in Theorem~\ref{Theorem:SU(2)}(2).

Now we may suppose $x,y \neq \pm 1$.
Then
\begin{align}
& \zeta^W_{SU(2)}(s,g_1,g_2) \nonumber\\
&= \frac{1}{(x-x^{-1})(y-y^{-1})}
\sum_{n=1}^\infty
\frac{(xy)^n +(x^{-1}y^{-1})^n - (x y^{-1})^n - (x^{-1} y)^n}{n^{s+2}} \\
&= \frac{Z(s+2,xy) + Z(s+2,x^{-1}y^{-1}) - Z(s+2,xy^{-1}) - Z(s+2,x^{-1}y)}{(x-x^{-1})(y-y^{-1})}.
\nonumber
\end{align}
This shows
\begin{align}
\zeta^W_{SU(2)}(-2,g_1,g_2)
&=\frac{(Z(0,xy) + Z(0,x^{-1}y^{-1})) - (Z(0,xy^{-1}) + Z(0,x^{-1}y))}{(x-x^{-1})(y-y^{-1})}
\nonumber \\
&=0,
\end{align}
where we have used the formula (\ref{eq:Z(0)}).
\end{proof}
\subsection{$r=3$}\label{r=3}
By the similar computation, we obtain
\begin{align}
& \zeta^W_{SU(2)}(s; g,g,g)\nonumber \\
&= \frac{Z(s+3,x^3) - 3 Z(s+3, x) + 3 Z(s+3,x^{-1}) - Z(s+3,x^{-3})}{(x-x^{-1})^3}.
\end{align}
If $x=i$, then
\[
\zeta^W_{SU(2)}(-2; g,g,g)
= \frac{4Z(1,-i) - 4Z(1,i)}{(2i)^3}
= \frac{-2 \pi i}{-8 i} = \frac{\pi}{4} \neq 0.
\]

\section{$SU(3)$}\label{SU(3)}

\subsection{On analytic continuation}
Let $G$ be a compact semisimple Lie group.
Then the Witten zeta $\zeta_G^W(s)$
has a meromorphic continuation to $\C$.
This is a special case of 
\begin{equation}
\sum_{m_1,\dots,m_r \ge 1} Q(m_1,\dots,m_r) P(m_1,\dots, m_r)^{-s}.
\end{equation}
Analytic continuation of these zeta functions
is discussed in [Mellin 1900], [Mahler 1928].

\subsection{A special value at a negative integer}

Let $n$ be a positive integer.
Let $M=2n+2$,
and suppose $\Re(s) > -n-\frac12+\frac{\varepsilon}{2}$,
with $\varepsilon>0$.
By \cite{Matsumoto}, we have
\begin{align}
\zeta_{SU(3)}^W(s) 
&= 
2^s \sum_{m,n\ge 1} \frac{1}{m^s n^s (m+n)^s}\\
&=
2^s \frac{\Gamma(2s-1) \Gamma(1-s)}{\Gamma(s)} \zeta(3s-1)
\nonumber \\
& + 2^s \sum_{k=0}^{M-1} (-1)^k \frac{s(s+1)\cdots(s+k-1)}{k!} \zeta(2s+k) \zeta(s-k) 
\nonumber\\
& + 2^s \frac{1}{2\pi \sqrt{-1}} \int_{\Re(z) = 2n+2-\varepsilon}
\frac{\Gamma(s+z) \Gamma(-z)}{\Gamma(s)} \zeta(2s+z) \zeta(s-z) dz.
\nonumber
\end{align}
Reminding 
\begin{equation}
\left. \frac{\Gamma(2s-1)}{\Gamma(s)} \right|_{s=-n}
= (-1)^{n-1} \frac{n!}{2 (2n+1)!},
\end{equation}
we can put $s=-n$ in this identity
and obtain
\begin{align}
\zeta_{SU(3)}^W(-n) &=
2^{-n} (-1)^{n-1} \frac{n! n!}{2 (2n+1)!} \zeta(-3n-1) \nonumber\\
&+ 2^{-n} \sum_{k=0}^{2n} (-1)^k \frac{(-n)(1-n)\cdots(k-1-n)}{k!} \zeta(-2n+k) \zeta(-n-k)\nonumber \\
&+ 2^{-n} (-1) \frac{(-n)(1-n) \cdots (-1) \cdot 1 \cdots n}{(2n+1)!} \frac12 \zeta(-3n-1).
\end{align}
This shows $\zeta_{SU(3)}^W(-n)=0$ for a positive odd integer $n$,
since $\zeta(-3n-1)=0$
and $\zeta(-2n+k) \zeta(-n-k)=0$ for $k=0,1,\dots,n$.
On the other hand,
for a positive even integer $n$,
we have
\begin{align}
\zeta_{SU(3)}^W(-n)
&= - 2^{-n} \frac{(n!)^2}{(2n+1)!} \zeta(-3n-1) \nonumber\\
& \qquad + 2^{-n} \sum_{k=0}^n \binom{n}{k} \zeta(-2n+k) \zeta(-n-k) =0,
\end{align}
where
the last equality follows from the following lemma:
\begin{lemma}\label{lemma}
For a positive even integer $n$,
we have
\begin{equation}
\sum_{k+l=n, k,l \ge 0} \frac{1}{k!l!} \zeta(-n-k) \zeta(-n-l)
= \frac{n!}{(2n+1)!} \zeta(-3n-1).
\end{equation}
\end{lemma}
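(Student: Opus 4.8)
The plan is to reduce the identity to a convolution identity among Bernoulli numbers and then prove that by generating functions.

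First I would translate every factor into a Bernoulli number. Since $n$ is a positive even integer, all the arguments $-n-k$ and $-n-l=-2n+k$ are negative integers $\le -2$, so $\zeta(-n-k)=-\frac{B_{n+k+1}}{n+k+1}$ and likewise for the other factor, with no $\zeta(0)$ subtlety. A key simplifying observation is that $\zeta(-n-k)=0$ whenever $n+k$ is even, i.e.\ (as $n$ is even) whenever $k$ is even; hence only the odd values of $k$ contribute, which is already visible in the smallest case $n=2$, where only $(k,l)=(1,1)$ survives and both sides equal $\tfrac{1}{14400}$. After the substitution the claim becomes the purely Bernoulli statement
\[
\sum_{i+j=n}\frac{1}{i!\,j!}\,\frac{B_{n+1+i}}{n+1+i}\,\frac{B_{n+1+j}}{n+1+j}
= -\frac{n!}{(2n+1)!}\,\frac{B_{3n+2}}{3n+2},
\]
where the two Bernoulli indices $n+1+i$ and $n+1+j$ run over a symmetric window and add up to $3n+2$.

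To prove this I would introduce the generating function $\Phi(t)=\log\frac{1-e^{-t}}{t}=\sum_{m\ge1}\frac{B_m}{m\cdot m!}t^m$, whose derivative $\Phi'(t)=\frac{1}{e^t-1}-\frac1t$ is holomorphic at $t=0$. The factorial weights $\frac{1}{i!}$ together with the shift $n+1+i$ in the Bernoulli index are exactly what differentiating $\Phi$ a total of $n+1$ times produces: since $\frac{d^{n+1}}{dt^{n+1}}t^{n+1+i}=\frac{(n+1+i)!}{i!}t^i$, one gets $\Phi^{(n+1)}(t)=\sum_{i\ge0}\frac{B_{n+1+i}}{(n+1+i)\,i!}\,t^i$, and hence the left-hand side above is precisely $[t^n]\bigl(\Phi^{(n+1)}(t)\bigr)^2$. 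Writing $g(t)=\frac{1}{e^t-1}$, so that $\Phi'=g-\frac1t$ and $g$ obeys the Riccati relation $g'=-g-g^2$, all derivatives of $g$ are polynomials in $g$; this reduces $(\Phi^{(n+1)})^2$, up to explicit elementary $t^{-j}$ terms, to known Taylor data and makes the coefficient extraction a finite, mechanical computation.

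The main obstacle is exactly that final extraction: one must show that the $t^n$-coefficient of $(\Phi^{(n+1)})^2$ collapses to the single Bernoulli number $B_{3n+2}$ on the right, with the precise rational factor $\frac{n!}{(2n+1)!(3n+2)}$. I would carry this out through the contour representation $\frac{B_m}{m!}=\frac{1}{2\pi i}\oint\frac{dz}{z^m(e^z-1)}$, which turns the left-hand side into a double contour integral of $\frac{1}{(e^z-1)(e^w-1)}$; the problem then becomes a partial-fraction reduction of this kernel that merges the two poles and leaves a single integral of $\frac{1}{e^z-1}$ --- the same mechanism underlying Matsumoto's formula used above. As an independent check, applying the functional equation to pass to positive even arguments cancels all powers of $2\pi$ uniformly and rewrites the identity as a binomially weighted Euler sum $\sum\binom{2a-1}{n}\binom{2b-1}{n}\zeta(2a)\zeta(2b)$ with $a+b=\frac{3n+2}{2}$, a variant of the classical relation $\sum_{j}\zeta(2j)\zeta(2N-2j)=(N+\tfrac12)\zeta(2N)$; matching this against $\zeta(3n+2)$ gives an alternative route and a useful consistency test.
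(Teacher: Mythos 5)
Your reduction is exactly the paper's first step: since $n$ is even, only odd $k$ contribute, and writing $\zeta(-m)=-\frac{B_{m+1}}{m+1}$ converts the claim into the Bernoulli convolution
\[
\sum_{k+l=n,\ k,l\ge 0}\frac{1}{k!\,l!}\,\frac{B_{n+1+k}}{n+1+k}\,\frac{B_{n+1+l}}{n+1+l}
= -\frac{n!}{(2n+1)!}\,\frac{B_{3n+2}}{3n+2},
\]
and your generating-function bookkeeping is correct ($\Phi^{(n+1)}(t)=\sum_{i\ge0}\frac{B_{n+1+i}}{(n+1+i)\,i!}t^i$, so the left-hand side is the $t^n$-coefficient of $\bigl(\Phi^{(n+1)}\bigr)^2$). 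But at the decisive point the proposal stops. The paper disposes of this convolution by citing a known result --- Theorem~2 of Chu--Wang \cite{CW} with $\alpha=\gamma=n-1$, $\delta=\varepsilon=1$ --- whereas you undertake to prove it from scratch and yourself label the coefficient extraction ``the main obstacle'' without resolving it. That obstacle \emph{is} the lemma: everything preceding it is routine translation.

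Concretely, neither of your two mechanisms is carried far enough to work. The Riccati relation $g'=-g-g^2$ expresses $\Phi^{(n+1)}$ as a polynomial in $g=\frac{1}{e^t-1}$ plus a $t^{-(n+1)}$ term, but squaring and taking $[t^n]$ of a polynomial in $g$ of degree growing with $n$ reintroduces convolutions of Bernoulli numbers rather than producing a single one; no mechanism for the collapse is given. Likewise, in the double contour integral the standard kernel identity
\[
\frac{1}{(e^z-1)(e^w-1)}=\frac{1}{e^{z+w}-1}\left(\frac{1}{e^z-1}+\frac{1}{e^w-1}+1\right)
\]
does ``merge the poles,'' but it generates cross terms of the form $\frac{1}{e^{z+w}-1}\cdot\frac{1}{e^z-1}$, which are again double convolutions; proving that, with your specific weights, all of these recombine into the single term $-\frac{n!}{(2n+1)!}\frac{B_{3n+2}}{3n+2}$ with every lower Bernoulli number cancelling is precisely the content of Chu--Wang's theorem, and the proposal contains no such computation. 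The ``independent check'' is in the same state: the classical relation $\sum_j\zeta(2j)\zeta(2N-2j)=(N+\frac12)\zeta(2N)$ is the unweighted case, while your identity carries binomial weights $\binom{2a-1}{n}\binom{2b-1}{n}$ and needs its own proof or reference. So what you have is a correct reduction (identical to the paper's) plus a plausible but unexecuted program; to close the gap, either carry out the kernel reduction in full or, as the paper does, invoke \cite[Theorem~2]{CW}.
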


Equivalently,
\begin{equation}
\sum_{k+l=n, k,l \ge 0} \frac{1}{k!l!} \frac{B_{n+1+k}}{n+1+k} \frac{B_{n+1+l}}{n+1+l} =
- \frac{n!}{(2n+1)!} \frac{B_{3n+2}}{3n+2}.
\end{equation}
This follows from \cite[Theorem~2]{CW}
when we substitute $\alpha=\gamma=n-1$ and $\delta=\varepsilon=1$.\qed

This concludes the proof of Theorem~\ref{Theorem:SU(3)}. 

\section{The groups over $\Z_p$}\label{Zp}

\subsection{$SL_2$}

Let $p$ be an odd prime.
We denote by $\Z_p$
the ring of integers in the non-archimedean local field $\Q_p$.
Jaikin-Zapirain \cite{Jaikin} obtains the following explicit formula:
\begin{equation}
\zeta_{SL_2(\Z_p)}^W(s) 
= Z_0(s) + Z_\infty(s),
\end{equation}
with
\begin{align}
Z_0(s) 
&= \zeta_{SL_2(\F_p)}^W(s) \nonumber\\
&= 1+2\left( \frac{p-1}{2} \right)^{-s} 
+ 2 \left( \frac{p+1}{2} \right)^{-s} 
+\frac{p-1}{2} \left( p-1 \right)^{-s} \nonumber\\
& \qquad +  p^{-s} + \frac{p-3}{2} (p+1)^{-s}, 
\\
Z_\infty(s)
&= \frac{1}{1-p^{-s+1}}\left(
4p \left( \frac{p^2-1}{2} \right)^{-s} 
+\frac{p^2-1}{2} (p^2-p)^{-s} \right. \nonumber\\
& \qquad \left. + \frac{(p-1)^2}{2} (p^2+p)^{-s}
\right).
\end{align}
This deduces
\begin{align}
Z_0(-2) &= p(p^2-1) = \left| SL_2(\F_p) \right|
=p(p+1)(p-1), \\
Z_\infty(-2) &= -p(p^2-1), \\
Z_0(-1) &= p(p+1), \\
Z_\infty(-1) &= -p(p+1), \\
Z_0(0) &= p+4, \\
Z_\infty(0) &= -\frac{4}{p-1} -p -4.
\end{align}
This shows
\begin{align}
\zeta_{SL_2(\Z_p)}^W(-2) &=0, \\
\zeta_{SL_2(\Z_p)}^W(-1) &=0, \\
\zeta_{SL_2(\Z_p)}^W(0) &=-\frac{4}{p-1},
\end{align}
which concludes the proof of Theorem~\ref{Theorem:SL(Zp)}.

\subsection{Congruence subgroups of $SL_2$}
In this subsection, we assume that $p$ is an odd prime.
By \cite{AKOV},
we obtain
\begin{align}
\zeta_{SL_2(\Z_p)[p^m]}^W(s)
&= p^{3m+2} \frac{1-p^{-2-s}}{1-p^{1-s}}.
\end{align}
This shows
\begin{align}
\zeta_{SL_2(\Z_p)[p^m]}^W(-2) &= 0, \\
\zeta_{SL_2(\Z_p)[p^m]}^W(-1) &= -p^{3m+1}/(p+1).
\end{align}
By taking an ``absolute limit'' $p\to 1$, we obtain
\begin{align}
\zeta_{SL_2(\Z_1)[1^m]}^W(s) &= \frac{s+2}{s-1}.
\end{align}

\subsection{Congruence subgroups of $SL_3$ and $SU_3$}

In this subsection, we assume that $p$ is a prime with $p\neq 3$.
By \cite{AKOV}, we have 
\begin{align}
\zeta_{SL_3(\Z_p)[p^m]}^W(s)
&= p^{8m} \frac{1+u(p) p^{-3-2s} + u(p^{-1}) p^{-2-3s} + p^{-5-5s}}{(1-p^{1-2s})(1-p^{2-3s})},
\end{align}
where $u(X) = X^3+X^2-X-1-X^{-1}$.
We notice that it can be factorized as
\begin{align}
\zeta_{SL_3(\Z_p)[p^m]}^W(s)
&= p^{8m} \frac{(1-p^{-2-s})(1-p^{-1-s})}{(1-p^{1-2s})(1-p^{2-3s})} \nonumber\\
& \times \left(
1+(p^{-1}+p^{-2}) p^{-s} + (1+p^{-1}) p^{-2s}+p^{-2-3s}
\right).
\label{eq:SL3}
\end{align}
We see that
\[
\zeta_{SL_3(\Z_p)[p^m]}^W(-2)=
\zeta_{SL_3(\Z_p)[p^m]}^W(-1)=0.
\]
The formula (\ref{eq:SL3}) shows
\begin{equation}
\lim_{p \to 1} \zeta_{SL_3(\Z_p)[p^m]}^W(s)
=\frac{(s+1)(s+2)}{(s-\frac12)(s-\frac23)},
\end{equation}
which is considered to be
``an absolute Witten zeta function
$\zeta_{SL_3(\Z_1)[1^m]}^W(s)$''.

Also by \cite{AKOV}, 
\begin{align}
\zeta_{SU_3(\Z_p)[p^m]}^W(s)
&= p^{8m} \frac{1+u(p) p^{-3-2s} + u(p^{-1}) p^{-2-3s} + p^{-5-5s}}{(1-p^{1-2s})(1-p^{2-3s})} \\
&= p^{8m} \frac{(1-p^{-2-s})(1-p^{-s})(1+p^{-1-s})}{(1-p^{1-2s})(1-p^{2-3s})}\nonumber\\
& \quad\times \left(
1+(1-p^{-1}+p^{-2}) p^{-s} + p^{-2-2s}
\right),
\label{eq:SU3}
\end{align}
where $u(X) = -X^3+X^2-X+1-X^{-1}$.
This shows
\[
\zeta_{SU_3(\Z_p)[p^m]}^W(-2)=
\zeta_{SU_3(\Z_p)[p^m]}^W(0)=0,
\]
while
\begin{equation}
\zeta_{SU_3(\Z_p)[p^m]}^W(-1)
= 2p^{8m-2} \frac{p-1}{p^5-1}
= 2p^{8m-2} \frac{1}{[5]_p}
\end{equation}
is non-zero
where $[n]_p = \frac{p^n-1}{p-1}$ is a $p$-analogue of an integer $n$.
This shows
\begin{equation}
\lim_{p\to 1} \zeta_{SU_3(\Z_p)[p^m]}^W(-1) = \frac25.
\end{equation}
By the formula (\ref{eq:SU3}), we have
\[
\lim_{p\to 1} \zeta_{SU_3(\Z_p)[p^m]}^W(s) = \frac{s(s+2)}{(s-\frac12)(s-\frac23)}.
\]



\noindent
Nobushige KUROKAWA\\
Department of Mathematics, Tokyo Institute of Technology, \\
Oh-Okayama, Meguro, Tokyo, 152-8551, Japan.\\
{\tt kurokawa@math.titech.ac.jp}\\
\\
Hiroyuki OCHIAI\\
Faculty of Mathematics, Kyushu University, \\
Motooka, Fukuoka, 819-0395, Japan. \\
{\tt ochiai@imi.kyushu-u.ac.jp}

\end{document}